\title{Selecting Optimal Sampling Rate for Stable Super-Resolution}
\author{
    Nuha Diab \\
    {\small Department of Applied Mathematics, Tel-Aviv University, Tel-Aviv, Israel} \\
    {\small nuhadiab@tauex.tau.ac.il}
}
\newtheorem{theorem}{Theorem}
\newtheorem{lemma}{Lemma}
\newtheorem{definition}{Definition}
\newtheorem{remark}{Remark}
\newtheorem{proposition}{Proposition}
\begin{document}

\maketitle

\begin{abstract}
    We investigate the recovery of nodes and amplitudes from noisy frequency samples in spike train signals, also known as the super-resolution (SR) problem. When the node separation falls below the Rayleigh limit, the problem becomes ill-conditioned. Admissible sampling rates, or decimation parameters, improve the conditioning of the SR problem, enabling more accurate recovery. We propose an efficient preprocessing method to identify the optimal sampling rate, significantly enhancing the performance of SR techniques.
\end{abstract}
\section{Introduction}
%\begin{enumerate}
%    \item SR problem (with complex coefficients)
%    \item conditioning of the problem
%   \item decimation technique (dima's theorem of existence of good $\lambda$)
%    \item used in practice (decimated prony + vexpa)
%    \item NP-hard problem to find good $\lambda$'s 
%    \item find how the singular values of the Toeplitz matrix are related to $\Delta$ and number of clusters (spectrum of T)
%\end{enumerate}
\par Consider the following "spike train" signal:
\begin{equation}\label{spike}
    \mu (x) := \sum_{i=1}^n a_j\delta_{x_j}, \quad a_j \in \mathbb{C}, \quad x_j \in \mathbb{T}:=
    \mathbb{R}\mod{2\pi}.
\end{equation}
where $\delta_x$ is the Dirac $\delta$-distribution. We refer to $\{x_j\}_{j=1}^n$ as the nodes and to $\{a_j\}_{j=1}^n$ as the coefficients.
Given noisy band-limited Fourier measurements $\hat{\mu}_{\epsilon}(\omega) = \hat{\mu}(\omega) + e(\omega) $, where
\begin{equation}\label{eq:samples}
    \hat{\mu}(\omega) = \sum_{j=1}^na_j\exp{(i\omega x_j)} , \quad \omega \in [-\Omega,\Omega]
\end{equation}
where $\Omega > 0$ and $\|e(\omega)\|_{\infty} \leq \epsilon$ for some $\epsilon > 0$, the goal is to recover $\{a_j, x_j\}_{j=1}^n$ in equation \eqref{spike}.
This \textbf{Super-resolution} (SR) problem is an inverse problem of great theoretical and practical interest, with diverse applications in optics, imaging, inverse scattering, signal processing, spectroscopy, and data analysis in general \cite{donoho1992a, bertero1996iii,lindberg2012mathematical, ammari2005music, stoica2005spectral,mulleti2017super,bhandari2016signal}. 

\par Let the SR factor be defined as $SRF:=(\Delta\Omega)^{-1}$, where $\Delta$ is the smallest separation of the nodes. When $SRF \gg 1$, the SR problem becomes very ill-conditioned. In this regime, the nodes form 'clusters'. An important goal is to prove that certain algorithms are optimal, attaining the optimal reconstruction bounds known as the Min-max error bounds \cite{Batenkov2021}. Extensive research has been conducted on this model regarding its stability analysis and algorithms \cite{li2020super,liu2021theory,derevianko2023esprit,batenkov2023super,demanet2015recoverability,morgenshtern2016super,diederichs2018sparse,kunis2018condition,fan2023efficient}.

\par The utilization of the Decimation technique was pioneered in \cite{Batenkov2021} to establish bounds on the Min-max error in the SR regime. The decimation method proceeds as follows: the spectral range $[-\Omega, \Omega]$ is uniformly sampled at a rate $\rho$, and subsequently, a system of equations of the form $\hat{\mu}(\rho k) = \sum_{j=1}^n a_j \exp(i\rho kx_j)$ is obtained. However, not all $\rho$'s are desirable or suitable; some may result in collisions and the formation of new clusters, thus worsening the ill-conditioned nature of the problem. The existence of admissible $\rho$'s within a certain interval was first proved in \cite{Batenkov2021}. An optimal approach proposed in \cite{Batenkov2021} involves employing an oracle to obtain the correct value of the decimation parameter $\rho$.
%We aim to develop a method for automatically determining an optimal decimation parameter $\lambda$.
\par The Decimation method has already been applied in SR algorithms, such as the Decimated Prony Method (DP) \cite{katz2023decimated} and VEXPA \cite{Briani2020}, to improve the conditioning of the problem (see also \cite{plonka2021deterministic}). Using decimation as a preprocessing step in any SR method with a suitable $\rho$ yields a set of solutions $\{e^{\imath \rho x_j}\}_{j=1}^n$ instead of $\{e^{\imath x_j}\}_{j=1}^n$. This new set of solutions forms a well-separated configuration compared to the undecimated one, making recovery easier and more accurate.
\par Furthermore, the optimality of DP is established due to the optimality of Prony's method for $\Omega=2n$ combined with decimation \cite{katz2023accuracy}. \textit{Despite its practical usage, there is currently no mathematical evidence supporting the identification of such admissible decimation parameter}.
\par In this paper, our objective is to develop a method to automatically select an optimal decimation parameter \( \rho \) within a given interval, ensuring effective separation between the nodes.
%The procedure is outlined as follows: Given a certain set of decimation parameters \( \{\lambda_i\}_{i=1,\dots,p} \), we approximate the smallest distance between the decimated nodes \( \Delta_{\lambda_i} \) for each \( \lambda_i \). Subsequently, we select \( \lambda_i \) with the highest value of the approximated \( \Delta_{\lambda_i} \). 
Towards that goal, we find the spectrum of the square Vandermonde matrix of the samples \eqref{eq:samples}, showing its dependence on \( \Delta \) and on the number of clusters the nodes form (similarly to \cite{batenkov2021spectral} for rectangular Vandermonde matrices with number of samples $N \gg 2n$). Consequently, we find the spectrum of the Toeplitz matrix of the samples \eqref{teoplitz}, which is our main tool to select the optimal decimation parameter. Lastly, we present the Enhanced Decimated Prony's method (EDP) and the Decimated Matrix Pencil method (DMP), demonstrating their time-complexity advantage over both DP and the Matrix Pencil Method (MP) \cite{hua1990matrix} and then show the numerical optimality of EDP.
%Additionally, we provide perturbation analysis for the de-aliasing process (which involves recovering $x_j$ from the decimated set $\{e^{\imath \lambda x_j}\}_{j=1,\dots,n}$), as introduced in \cite{cuyt2020get}.

\section{Preliminaries}
We recall some definitions from \cite{batenkov2021spectral}.
\begin{definition}%[Wrap-around distance]
    For $x \in \mathbb{R}$, we denote
    \[ \|x\|_{\mathbb{T}}:=\big|\arg{(e^{\imath x})}\big| = \big|x \mod{(-\pi,\pi]}\big|,\]
    where $\arg(z)$ is the principal value of the argument of $z \in \mathbb{C}\setminus\{0\}$, taking values in $(-\pi,\pi]$.
\end{definition}
For a set of $n$ distinct nodes $X:=\{x_j\}_{j}^n$ with $x_j \in (-{\pi \over 2},{\pi \over 2}]$, we introduce the following definitions.
\begin{definition}\label{def:min-sep}
    Define the \textbf{minimal separation} of the set $X$ as
    \[\Delta = \Delta(X) := \min_{i\neq j}\|x_i-x_j\|_{\mathbb{T}}.\]
    In addition, for any $\rho \in \mathbb{R}$ we define 
     \[\Delta_{\rho} = \Delta(\rho X) := \min_{i\neq j}\|\rho x_i-\rho x_j\|_{\mathbb{T}}.\]
\end{definition}
\begin{definition}[Single cluster configuration]
    The set of nodes $X$ is said to form an $(\Delta, \nu, n)$-cluster if 
    \[
        \forall x,y \in X, x\neq y: \Delta \leq \|y-x\|_{\mathbb{T}} \leq \nu\Delta.
    \]
\end{definition}
\begin{definition}[Multi-cluster configuration]\label{def:multi-cluster}
    The set of nodes $X$ is said to form an $((h^{(j)}, \nu^{(j)}, n^{(j)})_{j=1}^M, \eta)$-clustered configuration if there exist an $M$-partition $X=\bigcup_{j=1}^M\mathcal{C}^{(j)}$, such that for each $j\in \{1,...,M\}$ the following conditions are satisfied:
    \begin{enumerate}
        \item $\mathcal{C}^{(j)}$ is an $(h^{(j)}, \nu^{(j)}, n^{(j)})$-cluster.
        \item $\|x-y\|_{\mathbb{T}} \geq \eta > 0$, $\forall x \in \mathcal{C}^{(j)}, \forall y \in X \setminus \mathcal{C}^{(j)}$.
    \end{enumerate}
\end{definition}
\begin{definition}\label{def:van}
    For a finite set of nodes $X$ and sampling set $S$, we define the corresponding \textbf{Vandermonde} matrix as 
    \[
        V(X;S) = \big[e^{ikx}\big]_{k \in S}^{x\in X}.
    \]
\end{definition}
\begin{comment}
    {\begin{definition}
    The set of nodes $X:=\{x_j\}_{j=1,\dots,n}$, $x_j \in [-{\pi \over 2},{\pi \over 2})$ forms a $(\Delta,\eta,n,\ell,\nu)$-clustered configuration for some $\Delta > 0$, $2 \leq \ell, \leq n$, $\ell -1 \leq \nu < {\pi \over \Delta}$ and $\eta \geq 0$ if for each $x_j$ there exists at most $\ell$ distinct nodes 
    \[
        X^{(j)} = \{x_{j,k}\}_{k=1,\dots,r_j} \subset X, \quad 1\leq r_j \leq \ell, \quad x_{j,1}:=x_j,
    \]
    such that the following conditions are satisfied:
    \begin{enumerate}
        \item For any $y \in X^{(j)}\setminus \{x_j\}$, we have 
        \[ \Delta \leq \|y-x_j\|_{\mathbb{T}} \leq \nu\Delta.\]
        \item For any $y \in X \setminus X^{(j)}$, we have 
        \[ \|y-x_j\|_{\mathbb{T}} \geq \eta.\]
    \end{enumerate}
\end{definition}}
\end{comment}
Let $\sigma_1(L) \geq \sigma_2(L) \geq \dots \geq \sigma_{n}(L)$ denote the singular values of a matrix $L \in \mathbb{C}^{n\times n}$ and let $\lambda_1(L) \geq \lambda_2(L) \geq \dots \geq \lambda_n(L)$ denote it's eigenvalues.

\section{Identification of an Optimal Rate}
In this section, we establish a measure for approximating the minimal separation $\Delta_{\rho}$ between decimated nodes with decimation parameter $\rho$. For $M$ clusters, we prove in proposition \ref{pro:teop-eig} that $\sigma_{M+1}(T_{\rho}) \asymp \Delta_{\rho}^2$, where $\asymp$ denotes asymptotic scaling. 
\subsection{Main result and proofs}
%\par Since the Vandermonde matrix isn't directly available, we aim to identify a structured matrix $T$ that incorporates Vandermonde matrices within its decomposition. \\
Let $\hat{\mu}_k := \hat{\mu}(k) = \sum_{j=1}^na_je^{ikx_j}$ be the Fourier measurement of model \eqref{spike}.
For a set of nodes $X := \{x_1,...,x_n\}$, the Toeplitz matrix of the samples is defined as:
\begin{equation}\label{teoplitz}
    %T = T(X;n) := \begin{bmatrix}
    %        \mu_{n-1} & \mu_{n} & \dots & \mu_{2n-2} \\
    %        \vdots \\
    %        \mu_{0} & \mu_1 & \dots & \mu_{n-1} \\
    %    \end{bmatrix}
        T = T(X;n) := \begin{bmatrix}
            \mu_{n-1} & \mu_{n-2} & \dots & \mu_{0} \\
            \vdots & \vdots &  & \vdots \\
            \mu_{2n-1} & \mu_1 & \dots & \mu_{n-1} \\
        \end{bmatrix}.
\end{equation}
and it can be decomposed into:
\begin{align*}
    T = \tilde{V}_{n}\underbrace{diag(a_1,\dots,a_n)}_{A}V_n^*,
\end{align*}
where 
\begin{align*}\label{eq:van}
    &\tilde{V}_{n} := V(X;\{n-1,\dots,2n-2\}), \\ &V_n := V(X;\{0,\dots,n-1\}).
\end{align*}

\begin{comment}
   \begin{align*}
    T &= \underbrace{\begin{bmatrix}
            e^{inx_1} & \dots & e^{inx_{n}} \\
            \vdots \\
            e^{i2nx_1} & \dots & e^{i2nx_{n}} \\
        \end{bmatrix}}_{V_n}
        \underbrace{\begin{bmatrix}
            a_1 & \dots & 0 \\
            \vdots \\
            0 & \dots & a_n \\
        \end{bmatrix}}_{A}
        \underbrace{\begin{bmatrix}
            e^{i0x_1} & \dots & e^{i0x_{n}} \\
            \vdots \\
            e^{inx_1} & \dots & e^{inx_{n}} \\
        \end{bmatrix}^*}_{V_1} \\
        &= V_nAV_1^*
\end{align*} 
\end{comment}

We have 
\[ 
    \tilde{V}_n := V_ndiag(e^{i(n-1)x_1},...,e^{i(n-1)x_n}) = V_nE,
    \]
then we can write 
\begin{equation}\label{eq:sq-van}
    T = V_nDV_n^*, \quad D=EA.
\end{equation}

\begin{comment}
Let $V$ be the vandermonde matrix of the nodes and $A$ diagonal complex matrix. then we can write:
\[ V^*AV = V^*(A_R + iA_I)V = V^*A_RV+iV^*A_IV\]
where $A_R$ and $A_I$ are the real and imaginary part of $A$.
Let $A=U\Sigma T^*$ be the svd decomposition of V. thus
\[
    V^*AV = V^*A_RV+iV^*A_IV = T\Sigma U^*A_RU\Sigma T^* + iT\Sigma U^*A_IU\Sigma T^*
\]
\[= TP_1A_R^{1\over 2}U^*\Sigma^2U{A^*_R}^{1\over 2}P_1^*T^* + iTP_2A_I^{1\over 2}U^*\Sigma^2U{A^*_I}^{1\over 2}P_2^*T^*
\]
the following lemma proves the last step above:
\[ \Sigma UDU^*\Sigma = \Sigma UD^{1\over 2}D^{1 \over 2}U^*\Sigma^*\]
\[
    \lambda_j(\Sigma UDU^*\Sigma) = \sigma^2(\Sigma UD^{1\over 2}) = \lambda_j(D^{1\over 2}U^*\Sigma^2UD^{1\over 2})
\]
thus we get that we can write:
\[ \Sigma UDU^*\Sigma = P_1D^{1\over 2}U^*\Sigma^2UD^{1\over 2}P_1^*\]
where $P_1$ is an orthogonal matrix.\\
Denote by $T_1:=TP_1A_R^{1\over 2}U^*$ and $T_2:=TP_2A_I^{1\over 2}U^*$. we get:
\[ 
    |\lambda_j(V^*AV)| = |\lambda_j(T_1\Sigma^2T_1^*+iT_2\Sigma^2T_2^*)| = \lambda_j(T_1\Sigma^2T_1^*)+i\lambda_j(T_2\Sigma^2T_2^*)= \theta_j^{(1)}\lambda_j(\Sigma^2)+\theta_j^{(2)}\lambda_j(\Sigma^2)
    \]
\end{comment}
\begin{theorem}\label{thm:main}
    Let $Q \in \mathbb{C}^{n\times n}$ be a matrix of the form $Q = V^*DV$, where $D$ is a diagonal complex matrix and $V \in \mathbb{C}^{n\times n}$. Then we have:
    \begin{equation}
        \sigma_i(Q) = \big|\lambda_i(Q)\big| = \theta_i\lambda_i(VV^*), \quad |D| := (DD^*)^{1\over 2}.
    \end{equation}
    where $\lambda_n((DD^*)^{1\over 2}) \leq \theta_i \leq \lambda_1((DD^*)^{1\over 2})$.
\end{theorem}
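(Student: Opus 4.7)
The plan is to reduce the study of $Q = V^{*}DV$ to a Hermitian setting via two similarity transformations and then apply Ostrowski's inertia theorem twice to extract the bounds on $\theta_i$.

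For the singular-value half, I would compute $Q^{*}Q = V^{*}D^{*}VV^{*}DV = V^{*}(D^{*}WD)V$ with $W := VV^{*}$. Both $W$ and $H := D^{*}WD$ are Hermitian positive semidefinite (the latter because $x^{*}Hx = \|W^{1/2}Dx\|^{2} \ge 0$). Ostrowski's theorem in the form ``$\lambda_i(SHS^{*}) = \gamma_i \lambda_i(H)$ with $\sigma_n^{2}(S) \le \gamma_i \le \sigma_1^{2}(S)$'', applied once with $S = V^{*}$ to the outer shell and once with $S = D^{*}$ to the inner $W$, then gives
\[
  \sigma_i(Q)^{2} \;=\; \lambda_i(V^{*}HV) \;=\; \alpha_i\,\beta_i\,\lambda_i(VV^{*}),
\]
with $\alpha_i \in [\lambda_n(VV^{*}),\lambda_1(VV^{*})]$ and $\beta_i \in [\lambda_n(|D|)^{2},\lambda_1(|D|)^{2}]$. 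I would then regroup $\alpha_i$ with $\lambda_i(VV^{*})$ so as to express $\sigma_i(Q)$ as $\theta_i\,\lambda_i(VV^{*})$ with $\theta_i$ landing in the claimed interval.

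For the eigenvalue half, since $V$ is invertible, $Q$ is similar to $DVV^{*} = DW$ via conjugation by $V^{-*}$, so $\lambda_i(Q) = \lambda_i(DW)$. Because $W$ is PSD, the further identity $DW = W^{-1/2}(W^{1/2}DW^{1/2})W^{1/2}$ shows that $DW$ is similar to $W^{1/2}DW^{1/2}$. Splitting $D = U_D|D|$ into its diagonal unitary phase and its modulus, I would compare $W^{1/2}DW^{1/2}$ with the Hermitian companion $W^{1/2}|D|W^{1/2}$, for which a direct application of Ostrowski yields $\lambda_i(W^{1/2}|D|W^{1/2}) = \theta_i\,\lambda_i(W)$ with $\theta_i \in [\lambda_n(|D|),\lambda_1(|D|)]$.

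The most delicate step -- and what I expect to be the main obstacle -- is tying the two halves together to justify the equality $\sigma_i(Q) = |\lambda_i(Q)|$. Since $Q$ is not normal in general (because $D^{*}WD$ and $DWD^{*}$ need not coincide when the phases of the diagonal entries of $D$ are not aligned), this identity does not follow from spectral theory alone. I would attempt to recover it by arguing that conjugation by the diagonal phase unitary $U_D$ preserves absolute eigenvalues in the Hermitianised companion $W^{1/2}|D|W^{1/2}$, so that $|\lambda_i(W^{1/2}DW^{1/2})|$ and $\sigma_i(W^{1/2}DW^{1/2})$ coincide; failing that, I would weaken the conclusion to the asymptotic equivalence $\sigma_i(Q) \asymp |\lambda_i(Q)| \asymp \theta_i\,\lambda_i(VV^{*})$, which is in any case what is subsequently invoked in Proposition~\ref{pro:teop-eig}.
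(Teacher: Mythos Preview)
Your double-Ostrowski route for $\sigma_i(Q)$ does not recover the claimed interval for $\theta_i$. Applying Ostrowski once to the outer shell $V^{*}HV$ and once to $H=D^{*}WD$ gives $\sigma_i(Q)^2=\alpha_i\beta_i\,\lambda_i(W)$ with $\alpha_i\in[\lambda_n(W),\lambda_1(W)]$ and $\beta_i\in[\lambda_n(|D|)^2,\lambda_1(|D|)^2]$. Writing this as $\sigma_i(Q)=\theta_i\lambda_i(W)$ forces $\theta_i^2=\alpha_i\beta_i/\lambda_i(W)$, and the ratio $\alpha_i/\lambda_i(W)$ is only controlled up to the condition number of $W=VV^{*}$; no ``regrouping'' removes that factor, so the bound you obtain depends on $\kappa(VV^{*})$, not on $|D|$ alone.

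The paper avoids this by working with the \emph{complex} square root $D^{1/2}$ rather than $|D|$. Cycling gives $\mu_i:=\lambda_i(V^{*}DV)=\lambda_i(D^{1/2}WD^{1/2})$ exactly. If $u_i$ is an eigenvector for $\mu_i$, conjugation yields $u_i^{*}(D^{*})^{1/2}W(D^{*})^{1/2}=\bar\mu_i u_i^{*}$; multiplying this against $D^{1/2}WD^{1/2}u_i=\mu_i u_i$ and collapsing $(D^{*})^{1/2}D^{1/2}=D^{1/2}(D^{*})^{1/2}=|D|$ produces $\|Pu_i\|^2/\|u_i\|^2=|\mu_i|^2$ for the Hermitian PSD matrix $P:=(D^{*})^{1/2}WD^{1/2}$. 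Because $P=SWS^{*}$ is a \emph{single} congruence with $S=(D^{*})^{1/2}$ and $SS^{*}=|D|$, one application of Ostrowski already gives $\lambda_i(P)=\theta_i\lambda_i(W)$ with $\theta_i\in[\lambda_n(|D|),\lambda_1(|D|)]$. Your eigenvalue-half argument is closer in spirit to this, but the substitution $D=U_D|D|$ severs the exact eigenvalue link that the complex $D^{1/2}$ preserves; this is precisely why you cannot pass from $W^{1/2}DW^{1/2}$ to $W^{1/2}|D|W^{1/2}$. Your caution about the identity $\sigma_i(Q)=|\lambda_i(Q)|$ is well placed: the paper derives it from the same Rayleigh-type computation together with $\sigma_i(P)=\lambda_i(P)$, and that passage does merit scrutiny.
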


\begin{proof}
    \begin{align*}
        \big|\lambda_i(V^*DV)\big|^2 &= \lambda_i(V^*DV) \bar{\lambda}_i(V^*DV)  \\&= \lambda_i(V^*DV)\lambda_i(V^*D^*V).
    \end{align*}
    from properties of eigenvalues:
    \begin{align*}
        &\mu_i := \lambda_i(V^*DV) = \lambda_i(VV^*D) = \lambda_i(D^{1\over 2}VV^*D^{1\over 2}), \\
        &\bar{\mu}_i := \lambda_i(V^*D^*V) = \lambda_i(VV^*D^*) = \lambda_i((D^*)^{1\over 2}VV^*(D^*)^{1\over 2}).
    \end{align*}
    and thus 
    \begin{align*}
        &D^{1\over 2}VV^*D^{1\over 2}u_i = \mu_iu_i, \\
        &u^*_i(D^*)^{1\over 2}VV^*(D^*)^{1\over 2} = \bar{\mu}_i u^*_i.
    \end{align*}
    Note that these matrices are not necessarily Hermitian.
    \begin{align*}
        &u^*_i(D^*)^{1\over 2}VV^*(D^*)^{1\over 2}D^{1\over 2}VV^*D^{1\over 2}u_i = \bar{\mu}_i\mu_i u^*_iu_i, \\
        &u^*_i(D^*)^{1\over 2}VV^*|D|VV^*D^{1\over 2}u_i = \bar{\mu}_i\mu_i u^*_iu_i, \\
        &u^*_i(D^*)^{1\over 2}VV^*D^{1\over 2}(D^*)^{1\over 2}VV^*D^{1\over 2}u_i = \bar{\mu}_i\mu_i u^*_iu_i, \\
        &\frac{\|(D^*)^{1\over 2}VV^*D^{1\over 2}u_i\|_2^2}{\|u_i\|_2^2} = \mu_i\bar{\mu}_i = \sigma_i^2((D^*)^{1\over 2}VV^*D^{1\over 2}), \\
        &\sigma_i((D^*)^{1\over 2}VV^*D^{1\over 2}) = \lambda_i((D^*)^{1\over 2}VV^*D^{1\over 2}).
        %= \lambda_i(VV^*|D|)
    \end{align*}
    Where the last equivalence is true because $(D^*)^{1\over 2}VV^*D^{1\over 2}$ is a positive semi-definite hermitian matrix.
    By Ostrowski's Theorem for Hermitian matrices (Theorem 4.5.9 in \cite{horn2012matrix}), we have
    \[ \lambda_i(D^{1\over 2}VV^*(D^*)^{1\over 2}) = \theta_i\lambda_i(VV^*), \]
    where 
        \[
            \lambda_n((DD^*)^{1\over 2}) \leq \theta_i \leq \lambda_1((DD^*)^{1\over 2}).
        \]
    Lastly we have, $\big|\lambda_i(V^*DV)\big| = \sqrt{\lambda_i(V^*DV)\cdot \bar{\lambda}_i(V^*DV)} = \sqrt{\lambda_i(V^*D^*VV^*DV)} = \sigma_i(V^*DV)$.
\end{proof}

%\begin{theorem}[Corollary 2.1 in \cite{batenkov2021spectral}]\label{thm:eig-van}
%    Let $X$ form an $((h^{(j)}, \nu^{(j)}, n^{(j)})_{j=1}^M, \eta)$-clustered configuration, and furthermore suppose that $h^{(1)} = h^{(2)} = \dots = h^{(M)} = \Delta(X)$. For each $j = 1,2,\dots, \max_k n^{(j)}$, define 
%    \[
%        \ell_j := \#\{1\leq k\leq M: n^{(k)} \geq j\}.
%    \]
%    Then, there exist constants $C_{1}$ and $C_{2}$ such that for all $N\eta \geq C_1$ and $N\Delta \leq C_2$ there are precisely $\ell_j$ singular values of $V(X;N)$ scaling like $\asymp N^{1\over 2}(N\Delta)^{j-1}$. All the constants in the statement depend only on $(n^{(j)}, \nu^{(j)})_{j=1}^M$. 
%\end{theorem}
From now on, we choose $X$ to form a $((h^{(j)}, \nu^{(j)}, n^{(j)})_{j=1}^M, \eta)$-clustered configuration, as defined in Definition~\eqref{def:multi-cluster}, with $h^{(1)} = h^{(2)} = \dots = h^{(M)} = \Delta(X)$.

\begin{theorem}\label{thm:lower-bnd}
    There exist $C_1, C_2$ depending on $s:= \max_j n^{(j)}$ such that for $\eta \geq C_1$ and $\Delta \leq {C_2 \over n^2\nu}, \nu := \max_k \nu^{(k)}$, for each $m=1,...,s$ there are precisely $\ell_m := \# \{1\leq k \leq M : m \leq n^{(k)}\}$ singular values of $V_n:=V_n(X)$ bounded below by
    \[
        C\Delta^{m-1},
    \]
    where $C$ doesn't depend on $\Delta$.
\end{theorem}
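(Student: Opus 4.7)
The plan is to reduce the spectral analysis of $V_n$ to the single-cluster case by decomposing it into column blocks along the cluster partition, applying the known per-cluster estimates, and then closing the argument with a min-max bound that exploits the quasi-orthogonality between different clusters guaranteed by $\eta \geq C_1$. Concretely, I would write $V_n = [V_n^{(1)} \mid \cdots \mid V_n^{(M)}]$, where $V_n^{(k)} \in \mathbb{C}^{n \times n^{(k)}}$ is the Vandermonde block indexed by the nodes of $\mathcal{C}^{(k)}$. For each cluster, the single-cluster spectral estimate of \cite{batenkov2021spectral} (applied with the ``number of samples'' equal to $n$ and with the local cluster of size $n^{(k)}$) yields
\[
    \sigma_m\bigl(V_n^{(k)}\bigr) \;\geq\; c\,\Delta^{m-1}, \qquad m = 1,\ldots,n^{(k)},
\]
with $c$ depending only on $s$ and $\nu$. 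The hypothesis $\Delta \leq C_2/(n^2\nu)$ is precisely what is needed to stay in the regime $n\nu^{(k)}\Delta \ll 1$ where this scaling holds.

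Next I would quantify inter-cluster decoupling. For $x \in \mathcal{C}^{(k)}$, $y \in \mathcal{C}^{(l)}$ with $k \neq l$, the corresponding entry of $V_n^{(k)*} V_n^{(l)}$ is a Dirichlet-type sum in $x-y$ whose modulus is at most $\pi/\|x-y\|_{\mathbb{T}} \leq \pi/\eta$, so taking $C_1$ large renders the inter-cluster interactions small relative to the diagonal spectral masses $\sigma_m(V_n^{(k)})^2$. Assembling the bound by Courant-Fischer, for each $m \in \{1,\ldots,s\}$ I would construct an $\ell_m$-dimensional test subspace $U_m \subset \mathbb{C}^n$ by zero-padding, for each cluster with $n^{(k)} \geq m$, the right singular vector of $V_n^{(k)}$ associated with its $m$-th singular value. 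On each such vector $V_n$ produces mass at least $c\Delta^{m-1}$ from its own cluster, while the decoupling step uniformly controls the cross-cluster contributions. Courant-Fischer then gives $\sigma_{\ell_m}(V_n) \geq C\Delta^{m-1}$, and the matching upper count on the number of singular values that exceed $C\Delta^{m-1}$ follows from the identity $\sum_{m=1}^s \ell_m = n$ combined with the same block decomposition applied on the orthogonal complement, giving ``precisely'' $\ell_m$ rather than merely ``at least'' $\ell_m$.

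The main obstacle will be the perturbation estimate coupling different cluster blocks at the lower end of the spectrum. A crude Weyl bound on $G = V_n^* V_n = G_0 + E$ is too weak here: $\|E\|$ is only of order $1/\eta$ times combinatorial factors in $n$, which generally does not beat the smallest diagonal eigenvalue of order $\Delta^{2(s-1)}$. The correct route is to stay at the level of $V_n$ itself and write, for any test vector $v = (v^{(1)},\ldots,v^{(M)})$,
\[
    \|V_n v\|^2 \;=\; \sum_{k=1}^M \|V_n^{(k)} v^{(k)}\|^2 \;+\; \sum_{k \neq l} \bigl\langle V_n^{(k)} v^{(k)},\, V_n^{(l)} v^{(l)} \bigr\rangle,
\]
where the cross terms are bilinear with Dirichlet-kernel coefficients of size $O(1/\eta)$. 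Choosing $C_1$ large enough (depending on $s$ to absorb the combinatorial factors from the double sum) forces the cross terms to be dominated by the diagonal contribution, and this preserves the lower bound $c\Delta^{m-1}$ on all relevant directions simultaneously, uniformly across the scales $m = 1, \ldots, s$.
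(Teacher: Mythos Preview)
Your high-level plan coincides with the paper's: decompose $V_n$ into cluster blocks $V_n^{(k)}$, invoke the known single-cluster scaling $\sigma_m(V_n^{(k)})\asymp\Delta^{m-1}$, and then decouple the clusters. The paper, however, does not carry out the decoupling by hand; it simply quotes Proposition~7.1 of \cite{batenkov2021single}, which directly gives $\sigma_j(V_n)\ge\tfrac12\,\tilde\sigma_j$ with $\tilde\sigma_j$ the sorted union of the per-cluster singular values, and then plugs in the per-cluster eigenvalue asymptotics (Lemma~3 of \cite{diab2024spectral}) for each Gram block $(V_n^{(j)})^*V_n^{(j)}$.

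Your attempt to \emph{prove} the decoupling is where the gap lies. You correctly observe that an additive Weyl bound on $G=V_n^*V_n$ fails because $\|E\|\sim 1/\eta$ cannot beat $\Delta^{2(s-1)}$. But your proposed cure---expanding $\|V_nv\|^2$ and bounding the cross terms $\langle V_n^{(k)}v^{(k)},V_n^{(l)}v^{(l)}\rangle$ by their ``Dirichlet-kernel coefficients of size $O(1/\eta)$''---is the \emph{same} additive bound in disguise. Those coefficients control $\|(V_n^{(k)})^*V_n^{(l)}\|$, so the cross term is bounded by $O(1/\eta)\,\|v^{(k)}\|\,\|v^{(l)}\|$, whereas on your test vectors the diagonal contribution is only $\sigma_m(V_n^{(k)})^2\|v^{(k)}\|^2\asymp\Delta^{2(m-1)}\|v^{(k)}\|^2$. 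No constant $C_1$ depending only on $s$ can force $1/\eta\le 1/C_1$ below $\Delta^{2(m-1)}$ as $\Delta\to 0$, so the cross terms are \emph{not} dominated ``uniformly across the scales $m=1,\dots,s$'' as you claim.

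What makes Proposition~7.1 work is a \emph{multiplicative} estimate: one needs $|\langle V_n^{(k)}v^{(k)},V_n^{(l)}v^{(l)}\rangle|\le\theta\,\|V_n^{(k)}v^{(k)}\|\,\|V_n^{(l)}v^{(l)}\|$ with $\theta<1$ independent of $\Delta$, i.e.\ a uniform bound on the principal angles between the ranges $\mathrm{Ran}(V_n^{(k)})$. This does hold---in the limit $\Delta\to 0$ each range becomes a confluent-Vandermonde subspace at the cluster center, and the full confluent matrix is nonsingular whenever the centers are $\eta$-separated---but establishing it requires either that limiting argument or an explicit divided-difference basis within each cluster, not the raw Dirichlet bound on $(V_n^{(k)})^*V_n^{(l)}$. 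That is the missing ingredient; once you have it, your Courant--Fischer construction goes through.
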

\begin{proof}
    Recall the following Theorem:
    \begin{theorem}[Proposition 7.1 in \cite{batenkov2021single}]
        Let $\tilde{\sigma}_1 \geq \tilde{\sigma}_2 \geq \dots \geq \tilde{\sigma}_n$ denote the union of all the singular values of the matrices $V_n^{(j)}:= V(\mathcal{C}^{(j)};\{0,\dots,n-1\})$ in non-increasing order, and ${\sigma}_1 \geq {\sigma}_2 \geq \dots \geq {\sigma}_n$ denote the singular values of $V_n$. Then there exist $C_1(s), C_2(s)$ depending only on $s:= \max_j{n^{(j)}}$ such that for $\eta \geq C_1$ and $\Delta \leq {C_2 \over n^2\nu}, \nu := \max_k \nu^{(k)}$ we have 
        \[
            \sigma_j \geq {1\over 2}\tilde{\sigma}_j, \quad j=1,...,n.
        \]
    \end{theorem}
    We have to find the scaling of the singular values of each $V_n^{(j)}$ to finish the proof.
    For simplicity, assume that $n=2p+1$.
    %and let $\mathcal{C}^{(j)}=\{c_1,...,c_{n^{(j)}}\} \subseteq X$.
    Thus we can write,
    \[
        U_p^{(j)} = \big[e^{ikx}\big]_{k \in \mathcal{G}_p}^{x\in \mathcal{C}^{(j)}}.
    \]
%   \[
%       U_p(\mathcal{C}^{(j)},\mathcal{G}_p) := \begin{bmatrix}
%           e^{-ipc_1} & \dots & e^{-ipc_{n^{(j)}}} \\
%           \vdots \\
%           e^{ipc_1} & \dots & e^{ipc_{n^{(j)}}} \\
%       \end{bmatrix} = V_n^{(j)}D_j,
%   \]
    %$D_j:=diag{(e^{-ipc_1}, \dots , e^{-ipc_{n^{(j)}}})}$ 
    where $D_j:= diag(e^{-ipx})_{x \in \mathcal{C}^{(j)}} \in \mathbb{C}^{n^{(j)}\times n^{(j)}}$ and $\mathcal{G}_p:=\{-p,-p+1,...,0,...,p\}$ is the symmetric one-dimensional grid. Let $G_j := \big(U_p^{(j)}\big)^*U_p^{(j)}$ for $j=1,...,M$. Now we use the following Lemma and write it for the uni-variate case:
    \begin{lemma}[Lemma 3 in \cite{diab2024spectral}]
         Let $V_{\leq m}(\mathcal{G}) := [g^k]_{g\in \mathcal{G}}^{0\leq k \leq m}$. Then the sampling set $\mathcal{G}_p$ satisfies the condition $rank(V_{\leq n^{(j)} - 1}(\mathcal{G}_p)) \geq n^{(j)}$ and the eigenvalues of $G_j$ split into $n^{(j)}$ groups:
         \begin{align*}
             &\lambda_0 = \Delta^0(\tilde{\lambda}_0 + O(\Delta)), \quad \lambda_1 = \Delta^2(\tilde{\lambda}_1 + O(\Delta)), ..., \\ & \lambda_{n^{(j)}} = \Delta^{2(n^{(j)}-1)}(\tilde{\lambda}_{n^{(j)}} + O(\Delta)).
         \end{align*}
         where $\tilde{\lambda}_i$ doesn't depend on $\Delta$.
    \end{lemma}
    Since $\big(V_n^{(j)}\big)^*V_n^{(j)} = (D_j^{-1})^*G_jD_j^{-1}$, we finish the proof of Theorem \ref{thm:lower-bnd}.
\end{proof}
\begin{theorem}\label{thm:gram-eig}
    The singular values of the Vandermonde matrix $V_n$ scale as follows:
    \begin{align*}
        &\{\sigma_{1,i}(V_n)\}_{i=1}^{\ell_1} \asymp {\Delta}^{0}, \dots, \{\sigma_{s,i}(V_n)\}_{i=1}^{\ell_s} \asymp {\Delta}^{s-1},
    \end{align*}
    where $s := \max_k n^{(j)}$. 
\end{theorem}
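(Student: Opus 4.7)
The plan is to combine Theorem \ref{thm:lower-bnd}, which already furnishes the lower bound $\sigma_{m,i}(V_n) \geq C\Delta^{m-1}$ for $i \leq \ell_m$, with matching upper bounds of the same order. The whole argument runs through the singular values $\tilde{\sigma}_1 \geq \dots \geq \tilde{\sigma}_n$ of the block-diagonal concatenation of the per-cluster matrices $V_n^{(j)}$, so the two ingredients I need are (i) the precise scaling of $\sigma_i(V_n^{(j)})$, and (ii) a two-sided comparison between $\sigma_j(V_n)$ and $\tilde{\sigma}_j$.

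For ingredient (i), I would revisit the lemma quoted from \cite{diab2024spectral} more carefully. It asserts that the eigenvalues of $G_j = (U_p^{(j)})^* U_p^{(j)}$ split into $n^{(j)}$ groups scaling as $\Delta^{2(i-1)}(\tilde{\lambda}_{i-1} + O(\Delta))$ with non-vanishing leading constants $\tilde{\lambda}_{i-1}$. Taking square roots gives $\sigma_i(U_p^{(j)}) \asymp \Delta^{i-1}$, and since $V_n^{(j)} = U_p^{(j)} D_j^{-1}$ with $D_j$ a unitary diagonal matrix whose entries are unimodular, this scaling transfers to $\sigma_i(V_n^{(j)})$ without loss. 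Thus $\tilde{\sigma}_j$, the ordered union of these singular values across $j=1,\dots,M$, contains exactly $\ell_m = \#\{k : m \leq n^{(k)}\}$ entries of order $\Delta^{m-1}$ for each $m=1,\dots,s$.

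For ingredient (ii), I would invoke the proposition of \cite{batenkov2021single} cited in the proof of Theorem \ref{thm:lower-bnd} in its symmetric form, namely $\tfrac{1}{2}\tilde{\sigma}_j \leq \sigma_j(V_n) \leq 2\tilde{\sigma}_j$. The lower half is exactly what Theorem \ref{thm:lower-bnd} uses. The upper half follows by the same near-orthogonality mechanism: under the cluster separation $\eta \geq C_1$, the off-diagonal blocks of $V_n^* V_n - \bigoplus_j (V_n^{(j)})^* V_n^{(j)}$ have operator norm $O(1/\eta)$, so a standard perturbation / Weyl-type inequality controls $\sigma_j(V_n)$ from above by a constant times $\tilde{\sigma}_j$. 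Combining (i) and (ii) and rearranging in non-increasing order yields exactly the multiplicities $\ell_m$ at each scale $\Delta^{m-1}$.

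The main obstacle I expect is ingredient (ii): the statement quoted in Theorem \ref{thm:lower-bnd} is explicitly one-sided, so I need to either cite the two-sided version of that proposition or derive the upper bound independently. The cleanest route is the perturbation argument outlined above, since any upper bound that merely uses $\sigma_j(V_n) \leq \|V_n\|$ is far too crude to distinguish the different $\Delta^{m-1}$ scales. Once the matching upper bound is secured, the counting of $\ell_m$ is essentially bookkeeping and the theorem follows.
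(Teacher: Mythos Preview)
Your overall structure is sound, and ingredient (i) matches what the paper does inside Theorem~\ref{thm:lower-bnd}. The divergence is in ingredient (ii): the paper does \emph{not} try to prove a two-sided singular-value comparison $\sigma_j(V_n)\leq C\tilde\sigma_j$. Instead it exploits the explicit determinant formula $|\det V_n|=\prod_{k\neq j}|e^{ix_k}-e^{ix_j}|$, splits the product into intra-cluster and inter-cluster factors, and shows $|\det V_n|\asymp \Delta^{\sum_j\binom{n^{(j)}}{2}}=\Delta^{\sum_{m}\ell_m(m-1)}$. Since $|\det V_n|=\prod_i\sigma_i(V_n)$ and the lower bounds from Theorem~\ref{thm:lower-bnd} already give $\prod_i\sigma_i(V_n)\geq \tilde C\,\Delta^{\sum_m\ell_m(m-1)}$, the product is pinned from both sides, which forces every individual lower bound to be sharp up to constants.

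Your Weyl sketch for the upper half of (ii) has a real gap. The off-diagonal block of $V_n^*V_n-\bigoplus_j (V_n^{(j)})^*V_n^{(j)}$ indeed has operator norm $O(1/\eta)$, but Weyl only yields the \emph{additive} estimate $|\sigma_j^2(V_n)-\tilde\sigma_j^2|\lesssim 1/\eta$. For the small singular values $\tilde\sigma_j\asymp\Delta^{m-1}$ with $m\geq 2$, the perturbation $1/\eta$ dominates $\Delta^{2(m-1)}$, so no multiplicative bound $\sigma_j(V_n)\leq C\tilde\sigma_j$ follows. Getting a genuine two-sided multiplicative comparison in this regime requires the kind of careful block-factorization argument used in \cite{batenkov2021single} for the lower bound, not a standard perturbation inequality. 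The determinant trick sidesteps this entirely: because it is a \emph{product} constraint, it automatically delivers multiplicative control on every factor once one-sided bounds are in hand.
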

\begin{proof}
    The determinant of the Vandermonde matrix $V_n$ is: 
    \[
        det(V_n) = \prod_{k\neq j}|e^{ix_j} - e^{ix_k}|.
    \]
    Using Lemma 5.6 in \cite{Batenkov2021}, for $|x - x'| \leq {\pi \over 2}$ we have 
    \[
        {2\over \pi}|x - x'| \leq |e^{ix} - e^{ix'}| \leq |x - x'|.
    \]
    Thus we get that
    \begin{align*}
        det(V_n) &= \prod_{{\scriptscriptstyle j=1}}^{\scriptscriptstyle M}\prod_{{\scriptscriptstyle x, y \in \mathcal{C}^{(j)}}}|e^{ix} - e^{iy}|\prod_{{\scriptscriptstyle  j \neq i}}\prod_{{\scriptscriptstyle x \in \mathcal{C}^{(j)}, y \in \mathcal{C}^{(i)}}}|e^{ix} - e^{ix'}|  \\ &= C(\eta,\nu,n)\prod_{{\scriptscriptstyle j=1}}^{\scriptscriptstyle M}\prod_{{\tiny x, y \in \mathcal{C}^{(j)}}}|e^{ix} - e^{iy}| = C\Delta^{\sum_{j=1}^M {n^{(j)}\choose 2}}.
    \end{align*}
    Now we show that
    \begin{align*}
        \sum_{j=1}^M {n^{(j)}\choose 2} = \sum_{j=1}^M {n^{(j)}(n^{(j)}-1) \over 2} &= \sum_{j=1}^M \sum_{i=1}^{n^{(j)}}(i-1) \\ &= \sum_{i=1}^s \ell_i(i-1).
    \end{align*}
    where the last step is due to the definition of $\ell_j$.
    Thus we have that $det(V_n) = C\cdot \Delta^{\sum_{i=1}^s \ell_i(i-1)}$. It's known that $|det(V_n)| = \prod_{i=1}^n \sigma_i(V_n)$. Thus on the one hand we have $\prod_{i=1}^n \sigma_i(V_n) = C\cdot \Delta^{\sum_{i=1}^s \ell_i(i-1)}$ and on the other hand (by Theorem \ref{thm:lower-bnd}) we have that
    \begin{align*}
        \prod_{i=1}^n \sigma_i(V_n) \geq \prod_{i=1}^s \prod_{k=1}^{\ell_k}C_{i,k}\sigma_{i,k}(V_n) &= \prod_{i=1}^s\prod_{k=1}^{\ell_k}C_{i,k}\Delta^{i-1} \\ &= \tilde{C}\Delta^{\sum_{i=1}^s \ell_i(i-1)}.
    \end{align*}
    Since $C$ doesn't depend on $\Delta$, this forces the singular values to have exact scaling (i.e. $C_{i,k}$ don't depend on $\Delta$).
\end{proof}
Now we state our main result.
\begin{proposition}\label{pro:teop-eig}
     For any $\rho > 0$, the singular values of the Toeplitz matrix $T_{\rho}:=T(\rho X;n)$ scales as follows:
    \begin{align*}
        \{\sigma_{1,i}(T_{\rho})\}_{i=1}^{\ell_1} \asymp {\Delta}_{\rho}^{0}, \dots , \{\sigma_{s,i}(T_{\rho})\}_{i=1}^{\ell_s} \asymp {\Delta}_{\rho}^{2(s-1)},
    \end{align*}
    where $s := \max_j n^{(j)}$ and $\Delta_{\rho} := \Delta(\rho X)$.
\end{proposition}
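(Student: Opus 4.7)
The plan is to reduce the Toeplitz problem to the Vandermonde problem already handled by Theorem \ref{thm:gram-eig}, using Theorem \ref{thm:main} as the bridge. Starting from the decomposition \eqref{eq:sq-van}, for the decimated nodes $\rho X$ I write
\[
    T_{\rho} = V_n(\rho X)\,D_{\rho}\,V_n(\rho X)^{*}, \qquad D_{\rho} = E_{\rho}A,
\]
where $E_{\rho} = \operatorname{diag}\bigl(e^{\imath(n-1)\rho x_j}\bigr)_{j=1}^{n}$ is a unitary diagonal and $A = \operatorname{diag}(a_1,\dots,a_n)$. Setting $W_{\rho} := V_n(\rho X)^{*}$, I can rewrite this as $T_{\rho} = W_{\rho}^{*}D_{\rho}W_{\rho}$, which is exactly the form $V^{*}DV$ appearing in Theorem \ref{thm:main}.

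Next, I would apply Theorem \ref{thm:main} to $T_{\rho}$ with $V \leftarrow W_{\rho}$ and $D \leftarrow D_{\rho}$, obtaining
\[
    \sigma_i(T_{\rho}) \;=\; \theta_i\,\lambda_i\!\bigl(W_{\rho}W_{\rho}^{*}\bigr) \;=\; \theta_i\,\lambda_i\!\bigl(V_n(\rho X)^{*}V_n(\rho X)\bigr) \;=\; \theta_i\,\sigma_i^{2}\!\bigl(V_n(\rho X)\bigr),
\]
with $\lambda_n(|D_{\rho}|) \le \theta_i \le \lambda_1(|D_{\rho}|)$. To control the $\theta_i$, I would observe that $|D_{\rho}| = (D_{\rho}D_{\rho}^{*})^{1/2} = |A|$ because $E_{\rho}$ is unitary; hence the eigenvalues of $|D_{\rho}|$ are exactly $\{|a_j|\}_{j=1}^{n}$, which are positive constants independent of both $\rho$ and $\Delta_{\rho}$. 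Therefore $\theta_i \asymp 1$ uniformly in $\rho$.

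It then remains to insert the scaling of the Vandermonde singular values. Applying Theorem \ref{thm:gram-eig} to the node set $\rho X$ (whose minimal separation is $\Delta_{\rho}$ by Definition \ref{def:min-sep}, and whose cluster partition has the same multiplicities $(n^{(j)})_{j=1}^{M}$ as $X$), I get
\[
    \{\sigma_{m,i}(V_n(\rho X))\}_{i=1}^{\ell_m} \asymp \Delta_{\rho}^{m-1}, \qquad m = 1,\dots,s.
\]
Squaring and multiplying by the $\Theta(1)$ factor $\theta_i$ yields $\{\sigma_{m,i}(T_{\rho})\}_{i=1}^{\ell_m} \asymp \Delta_{\rho}^{2(m-1)}$, which is exactly the claim of Proposition \ref{pro:teop-eig}.

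The only nontrivial point is the orientation mismatch between Theorem \ref{thm:main}, stated for $V^{*}DV$, and the decomposition $T = V_nDV_n^{*}$ given in \eqref{eq:sq-van}; this is handled transparently by the substitution $W_{\rho} = V_n(\rho X)^{*}$. Everything else is an assembly of results already proved in the paper, so I do not expect a serious obstacle beyond verifying that the hypotheses of Theorem \ref{thm:gram-eig} (the separation and cluster-gap bounds) are inherited by $\rho X$ for the range of $\rho$ of interest.
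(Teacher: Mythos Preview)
Your argument is correct and follows exactly the paper's approach: the paper's proof simply reads ``Combining Theorems \ref{thm:main} and \ref{thm:gram-eig}, we get the desired result,'' and you have spelled out precisely that combination, including the harmless $W_{\rho}=V_n(\rho X)^{*}$ substitution to match orientations and the observation that $|D_{\rho}|=|A|$ so the $\theta_i$ are $\Delta_{\rho}$-independent.
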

\begin{proof}
    Combining Theorems \ref{thm:main} and \ref{thm:gram-eig}, we get the desired result.
    %Using Theorem 7.3 in \cite{batenkov2021single} we have a lower bound on the eigenvalues of the gramian matrix $G:=V^*V$. on the other hand, we can compute the determinant of this square vandermonde. thus combining the two result we get exact scaling of the eigenvalues of $G$ from pigon princple.
\end{proof}
\begin{remark}
    The constants in Proposition \ref{pro:teop-eig} also depend on the amplitudes $\{a_j\}_{j=1}^n$.
\end{remark}
\subsection{Numerical Validation}
Let $k \in \mathbb{N}$ be the number of clusters in a configuration with nodes in $X$. To validate Proposition \ref{pro:teop-eig}, we plotted the minimal separation of decimated nodes for all decimation parameters $\rho$ in the interval $\mathcal{I} := \big[{1\over 2}{\Omega \over 2n-1},{\Omega \over 2n-1}\big]$ against the $k+1$-th singular value of the Toeplitz matrix $T_{\rho}$. Figure \ref{fig:teo_sing} confirms that $\sigma_{k+1}(T_{\rho})$ scales proportionally to $\Delta_{\rho}^2$. 
%The right figure is for the single cluster scenario and the left figure is in the multi-cluster case.
\begin{figure}[htb]
  \includegraphics[width=0.49\linewidth]{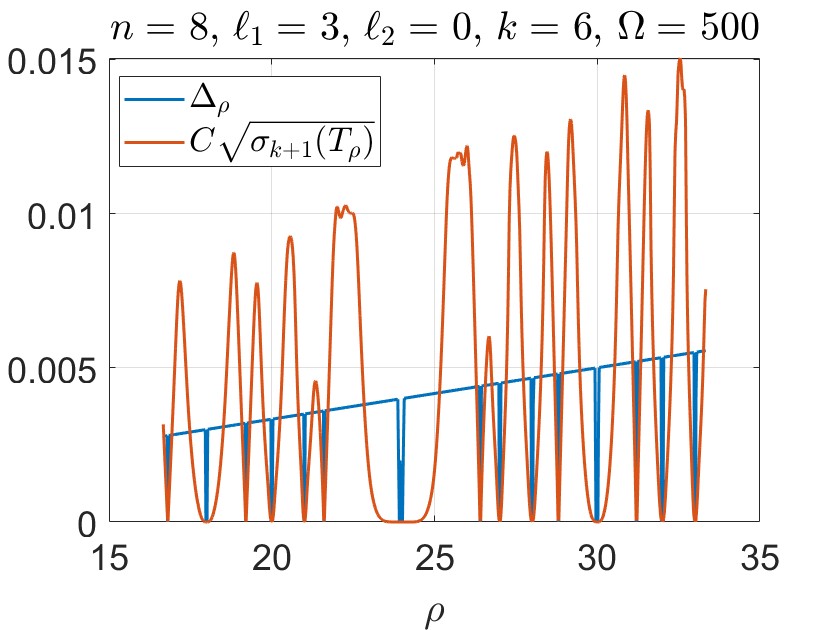} \hfill
  \includegraphics[width=0.49\linewidth]{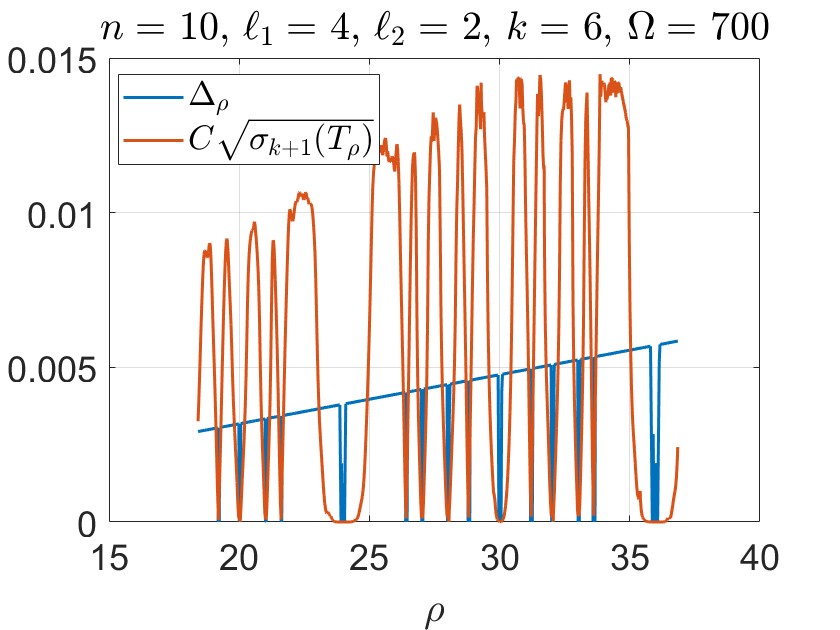}
  \caption{\small  (right) $X$ has one cluster of size $\ell_1$ with $SRF = 6$. (left) $X$ has two clusters of sizes $\ell_1$ and $\ell_2$ with $SRF = 3$. The plotted $\sqrt{\sigma_{k+1}(T_{\rho})}$ values are scaled by $C := \frac{n}{\Omega}$ for better visualization.
}
  \label{fig:teo_sing}
\end{figure}
\begin{remark}
    We expect our estimates in Proposition \ref{pro:teop-eig} to hold for noisy samples, using standard perturbation analysis for singular values. We leave it for future work.
\end{remark}

\section{Algorithm}
A decimation parameter $\lambda \in \mathcal{I}$ is said to be admissible if it attains good separation properties, in particular, when the cluster nodes are separated by at least $O(\Omega\Delta)$ and the non-cluster nodes by $\geq {1\over 2n^2}$ (for the full details, see Proposition 5.8 in \cite{Batenkov2021}).
\begin{comment}
    First, let's discuss the probability of choosing a suitable decimation parameter. Recall the following theorem:
\begin{theorem}[Proposition 5.8 in \cite{Batenkov2021}]
    Let $X=(x_1,...,x_n)$ form an $((h^{(j)}, \nu^{(j)}, n^{(j)})_{j=1}^M, \eta)$-clustered configuration, and furthermore suppose that $h^{(1)} = h^{(2)} = \dots = h^{(M)} = \Delta(X)$ and $\tau :=\max_j\{\nu^{(j)}\}$. Let $\Omega \leq {2n-1 \over 2}\cdot {1\over \Delta}$. For each $\rho > 0$ let $z_{x}(\rho) = e^{i\rho x}$.\\
    Then, each interval $I \subset \big[{1\over 2}{\Omega \over 2n-1},{\Omega \over 2n-1}\big]$ of length $|I| = {1\over \eta}$ contains a sub-interval $I' \subset I$ of length $|I'| \geq {(2n^2\eta)}^{-1}$ such that for each $\rho \in I':$
    \begin{enumerate}
        \item For all $x \in (h^{(j)}, \nu^{(j)}, n^{(j)})$ with $1\leq j \leq M$ and $x_{\ell} \in (h^{(\ell)}, \nu^{(\ell)}, n^{(\ell)}), j \neq \ell$, 
        \[
            \arg\Big({z_{x}(\rho) \over z_{x'}(\rho)}\Big) \geq {1\over 2n^2}.
        \]
        \item For all $x,x' \in (h^{(j)}, \nu^{(j)}, n^{(j)}), x \neq x'$ with $1\leq j \leq M$,
        \[
            \arg\Big({z_{x}(\rho) \over z_{x'}(\rho)}\Big) \geq \pi\rho\tau\Delta \geq {\pi\tau \over 4n-2}\Omega\Delta.
        \]
        {\color{red} Check again!}
    \end{enumerate}
\end{theorem}
Thus the probability of choosing a suitable decimation parameter in the interval $\big[{1\over 2}{\Omega \over 2n-1},{\Omega \over 2n-1}\big]$ is $p:={1\over n^2}$.
\end{comment}

\subsection{Selecting an optimal sampling rate}
By Proposition \ref{pro:teop-eig}, for any \(\rho \in \mathbb{N}\), we have \(\sigma_{M+1}(T_{\rho}) \asymp \Delta_{\rho}^2\), where \(M\) is the number of clusters with multiplicity at least 1. For a set \(\Lambda = \{\rho_j,\}_{j=1}^m \subset \mathbb{N}\), we select $\rho = \arg\max_{j}\{\sigma_{M+1}(T_{\rho_j})\}_{j=1}^m$
as the optimal decimation parameter with the best separation properties.

Since decimation alone is insufficient for recovery, any SR method applied to samples at rate \(\rho\) gives nodes \(\{e^{i\rho x_j}\}_{j=1}^n\), with each \(e^{i\rho x_j}\) having \(\rho\) candidates. To resolve this ambiguity, we use the technique from \cite{cuyt2020get} for de-aliasing, which employs a second shifted sample set \(\mathcal{M}_{ds}:=\{\mu(\rho k+ t)\}_{k=0}^{2n-1}\) alongside \(\mathcal{M}_{d}:=\{\mu(\rho k)\}_{k=0}^{2n-1}\), where \(\rho\) and \(t\) are co-prime. 
%In the noiseless case, the two sets allow us to determine $e^{ix_j} $ by intersecting the candidates of \(e^{i\rho_* x_j}\) and \(e^{it x_j}\). See \cite{cuyt2020get, Briani2020} for more details.
Let $\Phi_j := \exp{(ix_j)}$. In the noiseless case, we have $\mu(\rho k) = \sum_{j=1}^n a_j \Phi_j^{\rho k}$ and $\mu(\rho k + t) = \sum_{j=1}^n (a_j \Phi_j^t) \Phi_j^{\rho k}$.
%\[
%\mu(\rho k) = \sum_{j=1}^n a_j \Phi_j^{\rho k}, \quad \text{and} \quad \mu(\rho k + t) = \sum_{j=1}^n (a_j \Phi_j^t) \Phi_j^{\rho k}.
%\]
To recover $\{\Phi_j^t\}_{j=1}^n$, we first recover the amplitudes $\{a_j\}_{j=1}^n$ and $\{a_j\Phi_j^t\}_{j=1}^n$ from the sampling sets $\mathcal{M}_{d}$ and $\mathcal{M}_{ds}$, respectively. Then compute $\{\Phi_j^t\}_{j=1}^n = \left\{\frac{a_j\Phi_j^t}{a_j} \right\}_{j=1}^n$.
%\[
%\{\Phi_j^t\}_{j=1}^n = \left\{\frac{a_j\Phi_j^t}{a_j} \right\}_{j=1}^n.
%\]  
Finally, since $\rho$ and $t$ are co-prime, the intersection of the candidate sets for $e^{i\rho x_j}$ and $e^{it x_j}$ contains exactly one element. For noisy samples, we first match the aliased nodes \( \{\tilde{\Phi}_j^{\rho}\} \) and \( \{\hat{\Phi}_j^{\rho}\} \) before division. A more stable solution can be achieved using additional shifted sample sets; see \cite{cuyt2020get, Briani2020} for the full details.
\par Summarizing the above, Algorithm \ref{alg:decimation} can be applied to any SR method to generate its decimated version.

\begin{algorithm}
\caption{Decimated SR method}
\label{alg:decimation}
\begin{algorithmic}[1]
\Require \(M\), \(\{\hat{\mu}(\omega)\}_{\omega \in [-\Omega,\Omega]}\), \(\Omega\), $n$, SRmethod($\cdot$).
%\State For all natural numbers $\rho \in \mathcal{I}$ calculate \(\sigma_{M+1}(T_{\rho})\).
\State Select Admissible Decimation parameter:
\State \hspace{1em} (a) Select $\rho := \arg\max_{\rho' \in \mathcal{I}\cap\mathbb{N}}\{\sigma_{M+1}(T_{\rho'})\}$.
\State \hspace{1em} (b) Find \(t\) such that \(\rho\) and \(t\) are co-prime.
\State Solve for Decimated and Shifted samples:
\State \hspace{1em} (a) $\{\tilde{\Phi}_j^{\rho}, \tilde{a}_j\}_{j=1}^n =$ SRmethod($\mathcal{M}_d$).
\State \hspace{1em} (b) $\{\hat{\Phi}_j^{\rho}, \hat{a}_j\hat{\Phi}_j^{t}\}_{j=1}^n =$ SRmethod($\mathcal{M}_{ds}$).
\State Perform De-aliasing for $\{\tilde{\Phi}_j^{\rho},\hat{\Phi}_j^{t}\}$ (see the text above).
\State Return the estimates $\{\tilde{x}_j, \tilde{a}_j\}$.

\end{algorithmic}
\end{algorithm}

\begin{remark}
    Using Proposition 5.8 from \cite{Batenkov2021}, we can derive a lower bound of \( \frac{1}{n^2} \) for selecting an admissible decimation parameter from \( \mathcal{I} \), enabling randomness in algorithm \ref{alg:decimation}.
\end{remark}

%Next, we introduce the Enhanced Decimated Prony Method (EDP) as a modification of the DP, replacing the step of solving many decimated sub-prony problems and constructing a histogram for all the possible solutions with our suggested method for the identification of admissible decimation parameter (ref alg). The algorithm of EDP goes as follows, we select an admissible decimation parameter $\rho$ and a co-prime $t$ via our technique (ref alg) then we solve a decimated prony problem of the selected $\rho_*$ and perform de-aliasing as in VEXPA [ref]. DP method has time complexity of $O(\Omega) + O(\Delta^{-1})$. There are $O(\Omega)$ natural decimation numbers in the interval $\left[\frac{\Omega}{2(2n-1)}, \frac{\Omega}{2n-1}\right]$, for each one of them we construct Teoplitz matrix of size $n \times n$ of samples and calculate it's $k+1$ singular value. This procedure has time complexity of $O(n^3\Omega)$ since the SVD of the Teoplitz matrix is $O(n^3)$. The worst time complexity of finding a co-prime $t \neq 1$ for the selected decimation parameter $\rho_*$ of order $\Omega$ is $O(\Omega \log(\Omega))$, since the gcd computation is $O(\log(\Omega))$ and the worst case we need to compute it is up to $\Omega - 1$. We can choose $t = \rho_* + 1$ {\color{red} not sure why in VEXPA that wasn't the standard choice of co prime}. Then, since prony's method and the de-aliasing step depend only on $n$ (fixed), the time complexity of EDP is $O(\Omega\log(\Omega))$.
We introduce the Enhanced Decimated Prony Method (EDP) and the Decimated Matrix Pencil method (DMP) as improvements over DP and MP, respectively. EDP applies Algorithm \ref{alg:decimation} with Prony's method as the SR method, eliminating the need to solve multiple sub-Prony problems and constructing a histogram, as done in DP. Similarly, DMP applies Algorithm \ref{alg:decimation} with MP as the SR method.

\par DP and MP have a time complexity of $O(\Omega^2) + O(\Delta^{-1})$ and $O(\Omega^3)$, respectively. For EDP and DMP, there are $O(\Omega)$ natural decimation parameters in the interval $\mathcal{I}$. For each, we construct an $n \times n$ Toeplitz matrix of samples and compute its $M+1$-th singular value, which requires $O(n^3\Omega)$. Finding a co-prime $t \neq 1$ for $\rho$ has a worst-case complexity of $O(\Omega \log(\Omega))$, as gcd computations take $O(\log(\Omega))$, and the search might iterate up to $\Omega - 1$. Since matching the aliased nodes, applying Prony's method, MP (for $2n$ samples) and de-aliasing depend only on $n$ (fixed), the total time complexity of EDP and DMP is $O(\Omega \log(\Omega))$. 
%{\color{red} not sure why in VEXPA choosing $t = \rho_* + 1$ isn't the standard choice of co prime}

\par We implemented the EDP, DP, DMP and MP methods, assigning the number of nodes for both MP and DMP as an input and using $3n$ decimated samples in DMP. The implementation is in MATLAB. We recall that for DP, $N_{\rho}$ is the number of decimation parameters being tested in the interval $\mathcal{I}$ and $N_b$ is the number of bins in the constructed histogram of node candidates. The noise in the samples is generated from a Cauchy distribution. Each method was tested $10$ times across four different SRF values. In Figure \ref{fig:time-com}, we present the mean reconstruction error of the cluster node $x_1$ as a function of SRF. The results show that all methods achieve high accuracy, with EDP and DMP being the fastest, offering a speed improvement of up to two orders of magnitude.

\begin{figure}[htb]
    \centering
  \includegraphics[width=0.49\linewidth]{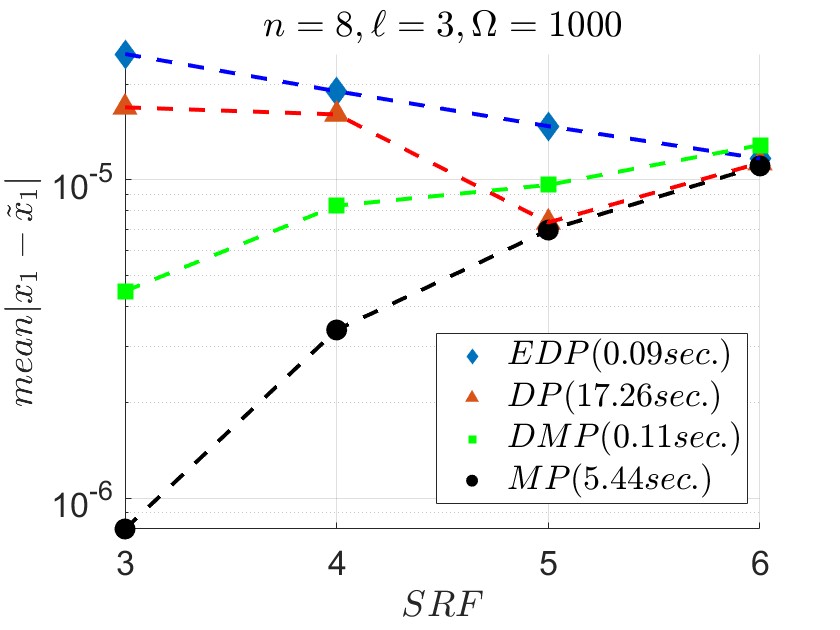} \hfill
  \includegraphics[width=0.49\linewidth]{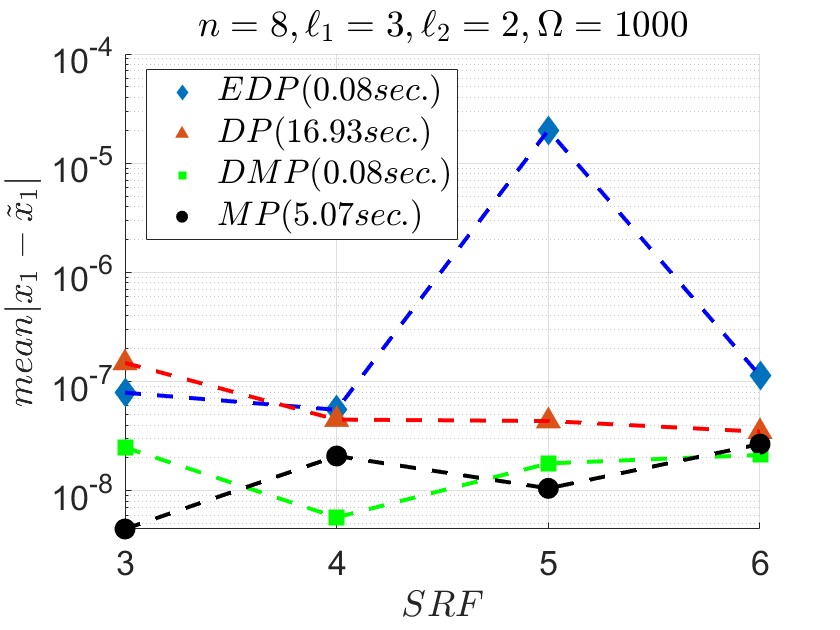}
  \caption{\small (right) single cluster configuration. (left) multi-cluster configuration. For both experiments the noise level is $10^{-6}$, $N_{\rho} = 900$ and $N_b = 3\Delta^{-1}$.}
  \label{fig:time-com}
\end{figure}
%computing singular values of Toeplitz matrix is known to be $O(n^3)$. For small $n$ this is enough. There is an algorithm that computes the inverse of teoplitz with complexity $O(nlogn)$ \cite{gohberg1994,kailath1999}.
\begin{remark}
    To obtain higher accuracy for DMP, we can use $N \gg 2n$ decimated samples if possible.
\end{remark}
\subsection{Optimality of EDP}
%Let $F$ be the space of signals of the form \eqref{spike}. Recall the definition of the min-max error bounds:
%\begin{theorem}[\cite{Batenkov2021}]\label{thm:min-max}
%    Let $X$ be a node configuration with one cluster $\mathcal{C}$ of size $1 < \ell \leq n$ and bounded $\{a_j\}$. For SRF $\geq 1$ and $\epsilon \lessapprox (\Omega \Delta)^{2\ell -1}$
%    \begin{align*}
%    \Lambda^{x_j}(\epsilon, F, \Omega) &\asymp 
%    \begin{cases}
%        SRF^{2\ell - 1} \Delta \epsilon &  x_j \in \mathcal{C}, \\
%        \frac{\epsilon}{\Omega} &  x_j \notin \mathcal{C},
%    \end{cases} \\
%    \Lambda^{a_j}(\epsilon, F, \Omega) &\asymp 
%    \begin{cases}
%        SRF^{2\ell - 1} \epsilon &  x_j \in \mathcal{C}, \\
%        \epsilon &  x_j \notin \mathcal{C}.
%    \end{cases}
%\end{align*}
%
%\end{theorem}
%
In Figure \ref{fig:opt}, we numerically show that EDP is optimal, meaning it achieves the min-max error bounds (Theorem 2.8 in \cite{Batenkov2021}) in the multi-cluster geometry. We plot the node/amplitude error amplification factors \eqref{naf} as a function of SRF.
\begin{equation}\label{naf}
    \mathcal{K}_{x_j} := \epsilon^{-1}\Omega |x_j - \tilde{x}_j|, \quad \mathcal{K}_{a_j} := \epsilon^{-1} |a_j - \tilde{a}_j|.
\end{equation}

%\begin{figure}[htb]
%    \centering
%%    \begin{minipage}{0.49\linewidth}
%%        \centering
%%        \includegraphics[width=\linewidth]{}
%%        \subcaption{Caption for top-left}
%%        \label{fig:sub1}
%%    \end{minipage}
%%    \hfill
%%    \begin{minipage}{0.49\linewidth}
%%        \centering
%%        \includegraphics[width=\linewidth]{}
%%        \subcaption{Caption for top-right}
%%        \label{fig:sub2}
%%    \end{minipage}
%%    \vspace{0.2cm}
%    \begin{minipage}{0.49\linewidth}
%        \centering
%        \includegraphics[width=\linewidth]{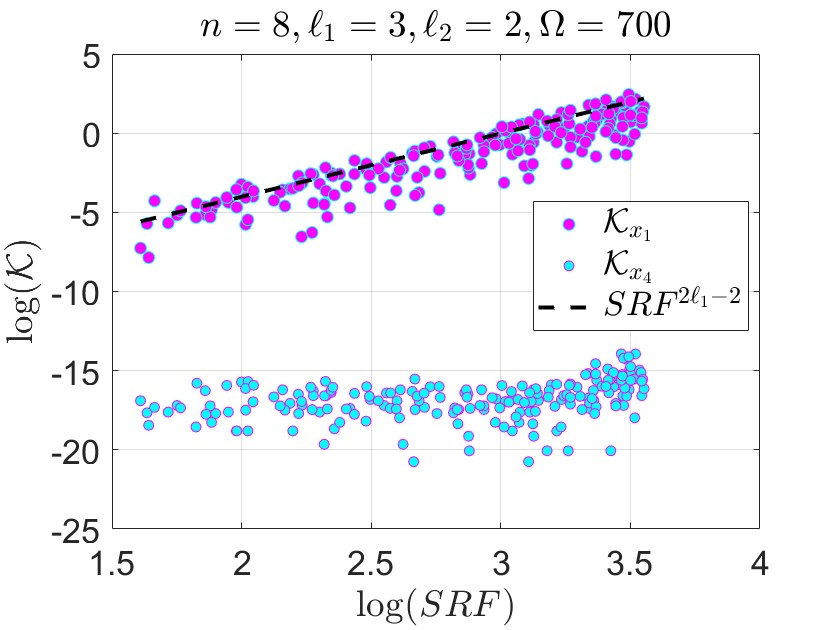}
%        \subcaption{ }
%        \label{fig:sub3}
%    \end{minipage}
%    \hfill
%    \begin{minipage}{0.49\linewidth}
%        \centering
%        \includegraphics[width=\linewidth]{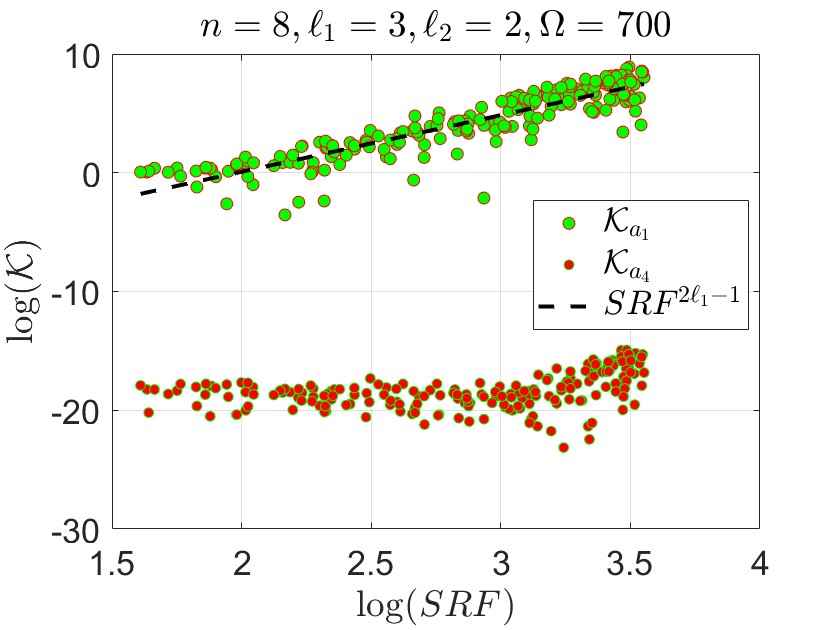}
%        \subcaption{ }
%        \label{fig:sub4}
%    \end{minipage}
%    \caption{\small EDP - asymptotic optimality. For cluster node $x_1$, $\mathcal{K}_{x_1}$ (a) scale
%                like $SRF^{2\ell_1 - 2}$, while the $\mathcal{K}_{a_1}$ (b) scale like $SRF^{2\ell_1 - 1}$.
%                For the non-cluster node $x_4$, both $\mathcal{K}_{x_4}$ and $\mathcal{K}_{a_4}$ are lower bounded
%                by a constant.}
%    \label{fig:opt}
%\end{figure}

\begin{figure}[htb]


  \includegraphics[width=0.49\linewidth]{figures/mc_k_x.jpg} \hfill
  \includegraphics[width=0.49\linewidth]{figures/mc_k_a.jpg}
  \caption{\small EDP - asymptotic optimality. For cluster node $x_1$, $\mathcal{K}_{x_1}$ (left) scales
                like $SRF^{2\ell_1 - 2}$, while the $\mathcal{K}_{a_1}$ (right) scales like $SRF^{2\ell_1 - 1}$.
                For the non-cluster node $x_4$, both $\mathcal{K}_{x_4}$ and $\mathcal{K}_{a_4}$ are lower bounded
                by a constant. These scaling rates are optimal.}
  \label{fig:opt}
\end{figure}

\textbf{Acknowledgment}. We thank Dr. Dmitry Batenkov and Prof. Ronen Talmon for their insightful comments.
\clearpage
\bibliographystyle{plain}
\bibliography{ref}

\begin{thebibliography}{10}

\bibitem{ammari2005music}
Habib Ammari, Ekaterina Iakovleva, and Dominique Lesselier.
\newblock A music algorithm for locating small inclusions buried in a
  half-space from the scattering amplitude at a fixed frequency.
\newblock {\em Multiscale Modeling \& Simulation}, 3(3):597--628, 2005.

\bibitem{batenkov2023super}
Dmitry Batenkov and Nuha Diab.
\newblock Super-resolution of generalized spikes and spectra of confluent
  vandermonde matrices.
\newblock {\em Applied and Computational Harmonic Analysis}, 65:181--208, 2023.

\bibitem{batenkov2021spectral}
Dmitry Batenkov, Benedikt Diederichs, Gil Goldman, and Yosef Yomdin.
\newblock The spectral properties of vandermonde matrices with clustered nodes.
\newblock {\em Linear Algebra and its Applications}, 609:37--72, 2021.

\bibitem{batenkov2021single}
Dmitry Batenkov and Gil Goldman.
\newblock Single-exponential bounds for the smallest singular value of
  vandermonde matrices in the sub-rayleigh regime.
\newblock {\em Applied and Computational Harmonic Analysis}, 55:426--439, 2021.

\bibitem{Batenkov2021}
Dmitry Batenkov, Gil Goldman, and Yosef Yomdin.
\newblock Super-resolution of near-colliding point sources.
\newblock {\em Information and inference}, 10(2):515--572, 2021.

\bibitem{bertero1996iii}
Mario Bertero and Christine De~Mol.
\newblock Iii super-resolution by data inversion.
\newblock In {\em Progress in optics}, volume~36, pages 129--178. Elsevier,
  1996.

\bibitem{bhandari2016signal}
Ayush Bhandari and Ramesh Raskar.
\newblock Signal processing for time-of-flight imaging sensors: An introduction
  to inverse problems in computational 3-d imaging.
\newblock {\em IEEE Signal Processing Magazine}, 33(5):45--58, 2016.

\bibitem{Briani2020}
Matteo Briani, Annie Cuyt, Ferre Knaepkens, and Wen-shin Lee.
\newblock Vexpa: Validated exponential analysis through regular sub-sampling.
\newblock {\em Signal processing}, 177:107722--, 2020.

\bibitem{cuyt2020get}
Annie Cuyt and Wen-shin Lee.
\newblock How to get high resolution results from sparse and coarsely sampled
  data.
\newblock {\em Applied and Computational Harmonic Analysis}, 48(3):1066--1087,
  2020.

\bibitem{demanet2015recoverability}
Laurent Demanet and Nam Nguyen.
\newblock The recoverability limit for superresolution via sparsity.
\newblock {\em arXiv preprint arXiv:1502.01385}, 2015.

\bibitem{derevianko2023esprit}
Nadiia Derevianko, Gerlind Plonka, and Markus Petz.
\newblock From esprit to espira: estimation of signal parameters by iterative
  rational approximation.
\newblock {\em IMA Journal of Numerical Analysis}, 43(2):789--827, 2023.

\bibitem{diab2024spectral}
Nuha Diab and Dmitry Batenkov.
\newblock Spectral properties of infinitely smooth kernel matrices in the
  single cluster limit, with applications to multivariate super-resolution.
\newblock {\em arXiv preprint arXiv:2407.10600}, 2024.

\bibitem{diederichs2018sparse}
Benedikt Diederichs.
\newblock {\em Sparse frequency estimation: Stability and algorithms}.
\newblock PhD thesis, Staats-und Universit{\"a}tsbibliothek Hamburg Carl von
  Ossietzky, 2018.

\bibitem{donoho1992a}
D.L. Donoho.
\newblock Superresolution via sparsity constraints.
\newblock {\em SIAM Journal on Mathematical Analysis}, 23(5):1309--1331, 1992.

\bibitem{fan2023efficient}
Zhiyuan Fan and Jian Li.
\newblock Efficient algorithms for sparse moment problems without separation.
\newblock In {\em The Thirty Sixth Annual Conference on Learning Theory}, pages
  3510--3565. PMLR, 2023.

\bibitem{horn2012matrix}
Roger~A Horn and Charles~R Johnson.
\newblock {\em Matrix analysis}.
\newblock Cambridge university press, 2012.

\bibitem{hua1990matrix}
Yingbo Hua and Tapan~K Sarkar.
\newblock Matrix pencil method for estimating parameters of exponentially
  damped/undamped sinusoids in noise.
\newblock {\em IEEE Transactions on Acoustics, Speech, and Signal Processing},
  38(5):814--824, 1990.

\bibitem{katz2023decimated}
Rami Katz, Nuha Diab, and Dmitry Batenkov.
\newblock Decimated prony's method for stable super-resolution.
\newblock {\em IEEE Signal Processing Letters}, 2023.

\bibitem{katz2023accuracy}
Rami Katz, Nuha Diab, and Dmitry Batenkov.
\newblock On the accuracy of prony's method for recovery of exponential sums
  with closely spaced exponents.
\newblock {\em Applied and Computational Harmonic Analysis}, 73:101687, 2024.

\bibitem{kunis2018condition}
S~Kunis and D~Nagel.
\newblock On the condition number of vandermonde matrices with pairs of
  nearly-colliding nodes. arxiv e-prints.
\newblock {\em arXiv preprint arXiv:1812.08645}, 2018.

\bibitem{li2020super}
Weilin Li, Wenjing Liao, and Albert Fannjiang.
\newblock Super-resolution limit of the esprit algorithm.
\newblock {\em IEEE transactions on information theory}, 66(7):4593--4608,
  2020.

\bibitem{lindberg2012mathematical}
Jari Lindberg.
\newblock Mathematical concepts of optical superresolution.
\newblock {\em Journal of Optics}, 14(8):083001, 2012.

\bibitem{liu2021theory}
Ping Liu and Hai Zhang.
\newblock A theory of computational resolution limit for line spectral
  estimation.
\newblock {\em IEEE Transactions on Information Theory}, 67(7):4812--4827,
  2021.

\bibitem{morgenshtern2016super}
Veniamin~I Morgenshtern and Emmanuel~J Candes.
\newblock Super-resolution of positive sources: The discrete setup.
\newblock {\em SIAM Journal on Imaging Sciences}, 9(1):412--444, 2016.

\bibitem{mulleti2017super}
Satish Mulleti, Amrinder Singh, Varsha~P Brahmkhatri, Kousik Chandra, Tahseen
  Raza, Sulakshana~P Mukherjee, Chandra~Sekhar Seelamantula, and Hanudatta~S
  Atreya.
\newblock Super-resolved nuclear magnetic resonance spectroscopy.
\newblock {\em Scientific reports}, 7(1):9651, 2017.

\bibitem{plonka2021deterministic}
Gerlind Plonka and Therese von Wulffen.
\newblock Deterministic sparse sublinear fft with improved numerical stability.
\newblock {\em Results in Mathematics}, 76(2):53, 2021.

\bibitem{stoica2005spectral}
Petre Stoica, Randolph~L Moses, et~al.
\newblock {\em Spectral analysis of signals}, volume 452.
\newblock Pearson Prentice Hall Upper Saddle River, NJ, 2005.

\end{thebibliography}
\end{document}